\newcommand{\HE}{Name of Handling Editor}
\newcommand{\DoS}{Month/Day/Year}
\newcommand{\DoA}{Month/Day/Year}
\newcommand{\CA}{Name of Corresponding Author}
\newcommand{\Names}{E.\ Y.\ S.\ Chan, R.\ M.\ Corless, L.\ Rafiee Sevyeri}
\newcommand{\Title}{Generalized Standard Triples\\
for Algebraic Linearizations of Matrix Polynomials}
\newcommand{\diag}[1]{\ensuremath{\mathrm{diag}{#1}}}
\newcommand{\cof}[1]{\ensuremath{\boldsymbol{P}_{#1}}}
\newcommand{\scof}[1]{\ensuremath{p_{#1}}}
\newcommand{\fieldF}{\mathbb{F}}
\newtheorem{remark}[theorem]{Remark}
\newtheorem{Proposition}[theorem]{Proposition}
\newtheorem{example}[theorem]{Example}
\newtheorem{Definition}[theorem]{Definition}
\newcommand{\mat}[1]{\ensuremath{\boldsymbol{#1}}}
\newcommand{\A}{\ensuremath{\mat{A}}}
\newcommand{\B}{\ensuremath{\mat{B}}}
\newcommand{\C}{\ensuremath{\mat{C}}}
\newcommand{\D}{\ensuremath{\mat{D}}}
\newcommand{\E}{\ensuremath{\mat{E}}}
\newcommand{\F}{\ensuremath{\mat{F}}}
\newcommand{\G}{\ensuremath{\mat{G}}}
\renewcommand{\H}{\ensuremath{\mat{H}}}
\newcommand{\I}{\ensuremath{\mat{I}}}
\newcommand{\J}{\ensuremath{\mat{J}}}
\renewcommand{\L}{\ensuremath{\mat{L}}}
\renewcommand{\P}{\ensuremath{\mat{P}}}
\newcommand{\Q}{\ensuremath{\mat{Q}}}
\newcommand{\R}{\ensuremath{\mat{R}}}
\newcommand{\T}{\ensuremath{\mat{T}}}
\newcommand{\U}{\ensuremath{\mat{U}}}
\newcommand{\V}{\ensuremath{\mat{V}}}
\newcommand{\X}{\ensuremath{\mat{X}}}
\newcommand{\Y}{\ensuremath{\mat{Y}}}
\newcommand{\Ph}{\ensuremath{\boldsymbol{\Phi}}}
\renewcommand{\emph}[1]{\textsl{#1}}
\begin{document}

%\bibliographystyle{plain}

%  Leave these commented lines here
%\input{ELAheader-template.tex}
% ELA insert correct page number
\setcounter{page}{1}

\thispagestyle{empty}

%Insert the title of the paper
 \title{\Title\thanks{Received
 by the editors on \DoS.
 Accepted for publication on \DoA. 
 Handling Editor: \HE. Corresponding Author: \CA}}

\author{
Eunice Y.\ S.\ Chan\thanks{Centre for Medical Evidence, Decision Integrity and Clinical Impact (MEDICI Centre), Department of Anesthesia and Perioperative Medicine, Schulich School of Medicine and Dentistry,
Western University (echan295@uwo.ca)}
% Remember to put \and between any two authors
\and
Robert M.~Corless\thanks{Ontario Research Centre for Computer Algebra,
School of Mathematical and Statistical Sciences, 
Western University (rcorless@uwo.ca)}
\and 
Leili Rafiee Sevyeri\thanks{Ontario Research Centre for Computer Algebra,
School of Mathematical and Statistical Sciences, 
Department of Applied Mathematics, 
Western University (lrafiees@uwo.ca)}
}

\markboth{\Names}{\Title}

\maketitle

\begin{abstract}
We define \emph{generalized standard triples} $\X$, $\Y$, and $L(z) = z\C_{1} - \C_{0}$, where $L(z)$ is a linearization of a regular matrix polynomial $\P(z) \in \mathbb{C}^{n \times n}[z]$, in order to use the representation $\X(z \C_{1}~-~\C_{0})^{-1}\Y~=~\P^{-1}(z)$ which holds except when $z$ is an eigenvalue of $\P$. This representation can be used in constructing so-called \emph{algebraic linearizations} for matrix polynomials of the form $\H(z) = z \A(z)\B(z) + \C \in \mathbb{C}^{n \times n}[z]$ from generalized standard triples of $\A(z)$ and $\B(z)$. 
This can be done even if $\A(z)$ and $\B(z)$ are expressed in differing polynomial bases. Our main theorem is that $\X$ can be expressed using the coefficients of the expression $1 = \sum_{k=0}^\ell e_k \phi_k(z)$ in terms of the relevant polynomial basis.
For convenience, we tabulate generalized standard triples for orthogonal polynomial bases, the monomial basis, and Newton interpolational bases; for the Bernstein basis; for Lagrange interpolational bases; and for Hermite interpolational bases. We account for the possibility of common similarity transformations. %We give explicit proofs using the Schur complement for the less familiar bases.
% We also give a first explicit proof that algebraic linearizations are linearizations, by constructing unimodular matrices $\E$ and $\F$ that transform the algebraic linearization to its standard form:
% $\E(z) \,\L(z)\,\F(z) = \diag{(\P(z), \I_n, \ldots, \I_n)}$.
\end{abstract}

\begin{keywords}
Standard triple, regular matrix polynomial, polynomial bases, companion matrix, colleague matrix, comrade matrix, algebraic linearization, linearization of matrix polynomials.
\end{keywords}
\begin{AMS}
65F15, 15A22, 65D05
\end{AMS}

% Sample article for the Electronic Journal of Linear Algebra
%%%%%%%%%%%%%%%%%%%%%%%%%%%%%%%%%%%%%%%%%%%%%%%%%%%%%%%%%%%%%
\section{Introduction} \label{intro-sec}

A \textsl{matrix polynomial}  $\P(z) \in \fieldF^{m\times n}[z]$ is a polynomial in the variable $z$ with coefficients that are $m$ by $n$ matrices with entries from the field $\fieldF$. We will use $\fieldF = \mathbb{C}$, the field of complex numbers, in this paper. Typically an expression in the monomial basis $\phi_k(z) = z^k$ is given for $\P(z)$, and often only \textsl{regular} matrix polynomials are considered, that is, with $m=n$ (we will use $n$ for the dimension) and where $\det\P(z)$ is not identically zero. 
Matrix polynomials have many applications and their study is of both classic and ongoing interest.  See the classic work~\cite{gohberg2009matrix} and the surveys~\cite{guttel2017nonlinear} and~\cite{mackey2015polynomial} for theory and applications.

In this paper, as is done in~\cite{amiraslani2008linearization}, we consider the case when \textsl{any} polynomial basis $\phi_k(z)$ is used. We require that the set $\{ \phi_k(z) \}$ for $ 0 \le k \le \ell$ forms a basis for polynomials of grade~$\ell$. The word ``grade'' is short for ``degree at most.'' 
Thus, we write our regular matrix polynomial as
\begin{equation}
    \P(z) = \sum_{k=0}^\ell \P_k \phi_k(z)\>,
\end{equation}
where the matrices $\P_k \in \fieldF^{n\times n}$ are square, and the \textsl{degree}\footnote{The \emph{degree} of a matrix polynomial $\P(z)$ is defined as follows.  If $\P(z)$ is identically zero, the degree is $-\infty$.  Otherwise, if $\P(z) = \sum_{k=0}^\ell \A_k z^k$ expressed in the monomial basis and $\A_{\ell}$ is not the zero matrix, then the degree of $\P(z)$ is $\ell$.} of $\P(z)$ is at most~$\ell$ ($\P(z)$ has grade $\ell$).   

The notion of ``grade'' is useful even for the monomial basis, but it is especially useful if the basis is an interpolational basis or the Bernstein basis $\phi_k(z) = B^n_k(z) = \binom{n}{k}z^k(1-z)^{n-k}$, when the degree of the polynomial may not be clear from the data.  

% actually, a non-breaking space ~
% after a , is not useful.  Commas
% provide good break spots in a 
% sentence; I changed ,~\cite
% here to , \cite
For more information about matrix polynomials, consult~\cite{Liesen:HLA:2013}. See also~\cite{mackey2015polynomial}, \cite{guttel2017nonlinear}, and consult the seminal book~\cite{gohberg2009matrix}. 
Linearizations using different polynomial bases were first systematically studied in \cite{amiraslani2008linearization}; for a more up-to-date treatment see~\cite{BuenoCachadina2020}. 
Some other recent papers of interest include~\cite{betcke2013nlevp}, \cite{mackey2006vector}, \cite{dopico2018block}, \cite{Fassbender2017}, and~\cite{robol2017framework}; this is a very active area. 
See also~\cite{al2012standard}.  In that paper, standard triples for structured matrices are studied. In~\cite{corless2021equivalence} several proofs are given of strict equivalence of various linearizations to the standard second linearization for the monomial basis, which we will use without much further comment in this paper.

\subsection{Organization of the Paper}
In Section~\ref{notation-subsec}, we establish notation, give the definitions of algebraic linearization and of generalized standard triples. We define this last in Definition~\ref{def:GST} with reference to the representation in Equation~\eqref{eq:resolventrepresentation}.
In Section~\ref{sec:AlgebraicLinearization}, we show how to use a generalized standard triple in the construction of algebraic linearizations.  We also give a proof, by construction of the necessary equivalence matrices $\E(z)$ and $\F(z)$, that algebraic linearizations are \emph{local} linearizations (a very useful notion from~\cite{Dopico2020}) in the same sense that its components are.  In particular, if the components $\A$ and $\B$ are linearizations in the sets $\Sigma_{\A}$ and $\Sigma_{\B}$ respectively, then the ``algebraic linearization'' is a linearization in the intersection $\Sigma_{\A} \cap \Sigma_{\B}$.

In Section~\ref{sec:universal}, we prove our main result, giving a universal expression for generalized standard triples for the linearizations using the polynomial bases that appear in Sections~\ref{recur-subsec},~\ref{bernstein-subsec}, and~\ref{lagrange-subsec}. 
In Section~\ref{triples-sec}, we introduce all the polynomial bases by scalar examples.
We sum up in the final section.

\subsection{Notation and Definitions} \label{notation-subsec}

Two matrix polynomials $\P_1(z)$ and $\P_2(z)$ are called \textsl{unimodularly equivalent} if there exist unimodular matrix polynomials (that is, matrix polynomials with constant nonzero determinant) $\E(z)$ and $\F(z)$ with $\P_1(z) = \E(z)\P_2(z)\F(z)$.
A matrix pencil $\L(z) := z \C_1 - \C_0$ is called a \textsl{linearization} of the matrix polynomial $\P(z)$ if both $\C_1$ and $\C_0$ are of dimension $N\ge n$ and $\L(z)$ is unimodularly equivalent to the block diagonal matrix $\diag(\P(z),\I_{N-n})$. 
Two linearizations $\L_{m}(z)$ and $\L_\phi(z)$ are called \textsl{strictly} equivalent if the corresponding matrices are equivalent in the following stronger sense: $\C_{1,m} = \E \C_{1,\phi} \F$ and $\C_{0,m} = \E(z)\C_{0,\phi}\F(z)$, with the same invertible (constant) matrices $\E$ and $\F$.

In the paper~\cite{Dopico2020} we find the powerful notion of a \emph{local} linearization on a subset $\Sigma \subset \fieldF$.  In the notation of their Definition 2.1, two \emph{rational} matrices $\G_1(z)$ and $\G_2(z)$ are said to be equivalent in a nonempty set $\Sigma$ if there exist matrices $\R_1(z)$ and $\R_2(z)$ each nonsingular in $\Sigma$ with $\R_1(z)\G_1(z)\R_2(z) = \G_2(z)$.  This notion, intended for use with matrix rational functions, is also useful for matrix polynomials, as we will see. 

There is a related idea to linearization, that of ``companion pencil'' $(\A,\B)$, where the only requirement is that $\det \P(z) = K\det\left( z\B-\A\right)$ for some nonzero constant $K$.  A companion pencil, therefore, has the same eigenvalues as the matrix polynomial.  Companion pencils that are not linearizations do not necessarily preserve eigenvectors or elementary divisors, and are less useful than linearizations.

The usual \textsl{reversal}\footnote{This definition, which is standard, is particularly appropriate for the monomial basis.  The coefficients of the reversed matrix polynomial in the monomial basis are simply the same matrices in reverse order.  The notion of a reversal, however, is independent of the basis used, and indeed reversals can be done differently.  In~\cite{corless2007pseudospectra} for instance we find a slightly different definition of reversal, appropriate for computation in a Lagrange or Hermite interpolational basis, which maps an arbitrary finite point to infinity; this difference allows for greater numerical stability.
%We will introduce something similar for the Bernstein linearization in this present paper.
} of a matrix polynomial of grade~$\ell$ is the polynomial $\mathrm{rev}\, \P(z) = z^\ell \P(z^{-1})$.  A linearization $\L(z) = z \C_1 - \C_0$ of $\P$ is called a \textsl{strong} linearization if $\mathrm{rev}\,\L(z) = \mat{C}_1-z\mat{C}_0$ is also a linearization of $\mathrm{rev}\,\P(z)$.

If $\L(z) = z \C_{1} - \C_{0} \in \mathbb{C}^{N \times N}[z]$ is a  {linearization} of $\P(z)$ in $z \in \Sigma$, then as a necessary consequence $\mathrm{det}(\P(z)) = q(z)\mathrm{det}(\L(z)) = \mathrm{det}(z \C_{1} - \C_{0})$ for some rational $q(z)$ whose zeros and poles lie outside $\Sigma$. The eigenvalues of $\P$ in $\Sigma$ are thus computable from the generalized eigenvalues of $\L$.  For this linearized problem several standard methods are available.

% For instance, this can be done with \texttt{eig(C0,C1)} in Matlab, or \texttt{Eigenvalues(C[0],C[1])} in Maple if the matrix variables are defined appropriately (and are of complex floating-point type in Maple).   

A \textsl{standard pair} $(\X,\T)$ for a regular matrix polynomial $\P(z)$ expressed in the monomial basis with coefficients $\P_k$ is defined in~\cite{gohberg2009matrix} or in~\cite{Liesen:HLA:2013} as having the following properties: $\X$ has dimension $n \times n\ell$, $\T$ has dimension $n\ell \times n\ell$, 
\begin{equation}
    \sum_{k=0}^\ell \P_k \X \T^k = 0\>,
\end{equation}
and that the $n\ell$ by $n\ell$ matrix
\begin{equation}
    \Q = \begin{bmatrix}
    \X \\
    \X \T \\
    \vdots \\
    \X \T^{\ell-1}
    \end{bmatrix}\label{eq:stacktriple}
\end{equation}
is nonsingular. We can then define a third matrix 
\begin{equation}
    \Y = \Q^{-1}\begin{bmatrix} \mat{0}_n \\ \vdots \\
    \mat{0}_n\\
    \I_n
    \end{bmatrix}
\end{equation}
and say that the triple $(\X,\T,\Y)$ is a \textsl{standard triple} for a \textsl{monic} $\P(z)$.
It is pointed out in~\cite{Liesen:HLA:2013} that monicity of $\P(z)$ is not required for many of the formul\ae\ to do with standard pairs (but is required for some).

Theorem 12.1.4 of~\cite{gohberg2005indefinite} states that 
if there are matrices $\X$, $\T$, and $\Y$ of dimension $n \times n\ell$, $n\ell \times n\ell$, and $n\ell \times n$ for which 
\begin{equation}
    \P^{-1}(z) = \X(z \I_n- \T)^{-1}\Y
\end{equation}
then $(\X, \T, \Y)$ is a standard triple for $\P(z)$. This is also reported in Lemma~2 in~\cite{al2012standard}. 
There are two other representations of a matrix polynomial given a standard triple: the right canonical form, and the left canonical form.  See Theorem 2.4 in~\cite{gohberg2009matrix}. However, we do not need those representations for algebraic linearization: it is the resolvent form above that we seek to generalize in this paper.  The reason is that it is this formula that is used in the proof that algebraic linearizations can be performed when the component matrix polynomials are expressed in different bases.

A polynomial basis $\{\phi_k(z)\}_{k=0}^\ell$ for polynomials of grade~$\ell$ is a set of polynomials for which there is a nonsingular matrix $\Ph$ relating the polynomials $\phi_k(z)$ to the monomials $1$, $z$, $\ldots$, $z^{\ell}$.  We can write this as 
\begin{equation}
    \begin{bmatrix}
    \phi_\ell(z)\\
    \phi_{\ell-1}(z)\\
    \vdots\\
    \phi_0(z)
    \end{bmatrix}
    = \Ph \begin{bmatrix}
    z^{\ell}\\
    z^{\ell-1}\\
    z^{\ell-2}\\
    \vdots\\
    1
    \end{bmatrix}\>.
\end{equation}
Frequently, we want the $\ell \times \ell$ matrix that only goes up to grade $\ell-1$.  This should not cause confusion.  This matrix forms the foundation for the proofs in~\cite{corless2021equivalence} of strict equivalence of various linearizations.
The matrix $\Ph$ is called the change-of-basis matrix and is usually exponentially ill-conditioned in the dimension.  For example, for the Bernstein polynomials $\phi_j^{\ell}(z) = \binom{\ell}{j}z^j(1-z)^{(\ell-j)}$ the change-of-basis matrix has entries $\phi_{i,j} = \binom{j}{i}/\binom{\ell}{j}$ and condition number $K_1 = K_\infty = (\ell+1)\binom{\ell}{s}2^{\ell-s}$ where $s = \lceil (\ell-2)/3\rceil$ (the notation $\lceil x \rceil$ means the ceiling of $x$, the least integer not smaller than $x$). A short computation shows $K \sim 3^{\ell+1}\sqrt{\ell/4{\pi}} $ as $\ell \to \infty$ and is thus exponentially growing with the dimension.

The constructions and definitions of standard triple discussed above are apparently tied to the monomial basis because of the powers $\mat{T}^k$ in Equation~\eqref{eq:stacktriple}. We would like to relax this restriction and extend the notion of standard triple to other bases, and also to the non-monic case.
In particular, we would like the following extension of Theorem 2.4 in~\cite{gohberg2009matrix} or Theorem 12.1.4 in~\cite{gohberg2005indefinite} to be available: If a matrix $\X \in \mathbb{C}^{n \times N}$, the linearization $\L(z)= z \C_{1} - \C_{0} \in \mathbb{C}^{N\times N}[z]$, and a matrix $\Y \in \mathbb{C}^{N \times n}$ satisfy
\begin{equation}
	\P^{-1}(z) = \X(z \C_{1} - \C_{0})^{-1}\Y \label{eq:resolventrepresentation}
\end{equation}
for $z \notin z(\P)$ (the set of polynomial eigenvalues of $\P$), then $\X$, $\L(z)$, and $\Y$ form a \textsl{generalized standard triple} for $\P(z)$. This obviously requires regularity of $\P$ because the formula contains $\P^{-1}(z)$.  

Indeed, we simply require $\L(z)$ to be a linearization (or local linearization, restricted to some set $\Sigma$) and take this extension as a \textsl{definition}. Such things exist, as we will demonstrate, and are useful, as shown in~\cite{chan2018algebraic}.

A referee pointed out that a simple characterization of linearizations $\L(z)$ can be found in~\cite{Fassbender2017}; this can be used as a starting point, here.

\begin{Definition}\label{def:GST}
Matrices $\mat{X}$, $z\mat{C}_1-\mat{C}_0$, and $\mat{Y}$ form a \textsl{generalized standard triple} for the regular matrix polynomial $\mat{P}(z)$ if $\L(z) = z \C_{1} - \C_{0}$ is a linearization of $\mat{P}$ and  Equation~\eqref{eq:resolventrepresentation} holds.
\end{Definition}

% This definition is somewhat different to that of~\cite[Chapter 18]{hogben2006handbook} and~\cite{gohberg2009matrix}. 
Note that the matrices $\X$ and $\Y$ do not depend on~$z$, but the linearization $\L(z)$ does, albeit only linearly; we could instead have chosen to use the words ``standard quadruple'' to mean $(\X,\C_1,\C_0,\Y)$ where $z$ does not appear of any of these matrices, but this quibble seems to be a matter of aesthetics only; we may use the term ``triple'' to refer to $\X$, the linearization, and $\Y$.  

\begin{Proposition}\label{rem:readoff}
If $\L(z)$ is a linearization for $\P(z)$, then there exists $\X$ and $\Y$ forming a generalized standard triple with $\L(z)$ in the above sense.
\end{Proposition}
\begin{proof}
This is already proved in~\cite{gohberg2009matrix}. If $\L(z)$ is a linearization for $\P(z)$ then there exist unimodular matrix polynomials $\E(z)$ and $\F(z)$ with $\F^{-1}(z)(z\C_1-\C_0)^{-1}\E^{-1}(z) = \diag( \P^{-1}(z), \I_n, \ldots, \I_n)$.  By premultiplying by $\X_p = [\I_n, 0, \ldots, 0]$ and postmultiplying by $\Y_p = [\I_n, 0, \ldots, 0]^T$, we may find $\X = \X_p\F^{-1}(z)$ and $\Y = \E^{-1}(z)\Y_p$ so that Equation~\eqref{eq:resolventrepresentation} holds. 
\end{proof}
\begin{remark}
Thus, as a referee pointed out, generalized standard triples may be read off from the proof that $\L(z)$ is indeed a linearization (we shall see this explicitly, shortly).
The matrices $\E(z)$ and $\F(z)$ above are not unique; given a nonsingular constant matrix $\mat{B}$ of the right dimension, $\diag{(\I_n,\mat{B})}\E(z) (z\C_1 - \C_0)\F(z)\diag{(\I_n,\mat{B}^{-1})}$ is also $\diag(\P(z),\I_n, \ldots, \I_n)$.  Therefore, generalized standard triples are also not unique.
\end{remark}

Without using the rank criterion characterization of~\cite{Fassbender2017}, say, it is not a priori clear that if Equation~\eqref{eq:resolventrepresentation} holds then $\L(z)$ is necessarily a linearization of $\P(z)$.  We do have in any case, however, the following:

\begin{lemma}
If $\X$, $\Y$, and the matrix pencil $\L(z)=z\mat{C}_1-\mat{C}_0$ are such that Equation~\eqref{eq:resolventrepresentation} holds, then the matrix pencil $L(z)$ is at least a companion pencil for $\mat{P}(z)$.
\end{lemma}
\begin{proof}
The norm of the resolvent $\|\mat{P}^{-1}(z)\|$ will be large if and only if $\| \left(z\mat{C}_1-\mat{C}_0\right)^{-1}\|$ is large. 
\end{proof}
For various reasons, we usually do not wish to invert a ``leading coefficient'' here; for instance, if the polynomial basis is not degree-graded, e.g.~for the Bernstein basis,
then in order to even look at the true leading coefficient, 
we have to form a particular linear combination of the existing coefficients.  In floating-point arithmetic, rounding errors can disguise the rank of the resulting matrix, hence our interest in the generalization.

If $\X$, $z\C_1-\C_0$, and $\Y$ form a generalized standard triple according to our definition, then so do $\X\U$, $\V^{-1}(z\C_1-\C_0)\U$, and $\V^{-1}\Y$ for any nonsingular matrices $\U$ and $\V$ of dimension $N$ by $N$.  

Several similarities are used very frequently.  For convenience, we describe two of the most common explicitly here.
\begin{lemma}[Flipping]
	Put $\J \text{as the} \; N \times N$ ``anti-identity'', also called the \textsl{sip} matrix, for \textsl{standard involutory permutation},  $\J_{i,j} = 0$ unless $i + j = N + 1$ when $\J_{i, N+1-i}~=~1$. Then $\J^{2}~=~\I$ and the ``flipped'' linearization $\mat{L_{F}}(z) = \J(z \C_{1} - \C_{0})\J$ has in its generalized standard triple the matrices $\mat{X_{F}} = \mat{XJ}$ and $\mat{Y_{F}} = \mat{JY}$. 
	%The paper~\cite{mackey2016linearizations} calls this matrix ``$\mat{R}$''.
\end{lemma}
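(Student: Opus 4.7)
The plan is to recognize this Flipping lemma as an immediate corollary of the preceding general Similarity lemma, specialized to the choice $\mat{S} = \J$. So the proof will be essentially a one-line invocation, preceded by a verification of the only non-trivial property of $\J$ that is used, namely $\J^{-1} = \J$.

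First I would note that $\J$ is defined as the $N \times N$ matrix with a single $1$ in each row and column, placed on the anti-diagonal. A direct index computation shows $(\J^2)_{i,k} = \sum_j \J_{i,j}\J_{j,k}$, and since $\J_{i,j}$ is nonzero only for $j = N+1-i$ and then $\J_{j,k}$ is nonzero only for $k = N+1-j = i$, we get $\J^2 = \I$. In particular $\J$ is nonsingular with $\J^{-1} = \J$.

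Next I would apply the previous Similarity lemma with $\mat{S} = \J$. Setting $\mat{B}_1 = \J^{-1}\C_1\J = \J\C_1\J$ and $\mat{B}_0 = \J^{-1}\C_0\J = \J\C_0\J$, the lemma yields that $\widetilde{\X} = \X\J$, $z\mat{B}_1 - \mat{B}_0 = \J(z\C_1 - \C_0)\J = \L_F(z)$, $\widetilde{\Y} = \J^{-1}\Y = \J\Y$ form a generalized standard triple for $\P(z)$. Recognizing $\widetilde{\X} = \X_F$ and $\widetilde{\Y} = \Y_F$ finishes the proof.

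There is no real obstacle here; the content of the lemma is simply that flipping is a similarity transformation, and the involutive property $\J^{-1} = \J$ is what makes the left and right transformations symmetric, eliminating any need to distinguish $\J$ from $\J^{-1}$ in the formulas for $\X_F$ and $\Y_F$. I would therefore write the proof as a brief two-step argument, without further calculation.
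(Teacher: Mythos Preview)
Your proposal is correct and matches the paper's approach exactly: the paper's proof is the single word ``Immediate,'' relying (as you do) on the preceding Similarity lemma with $\mat{S}=\J$ together with $\J^2=\I$. Your write-up simply makes explicit what the paper leaves implicit.
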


\begin{proof}
	Immediate.
\end{proof}

\begin{remark}
	Flipping switches both the order of the equations and the order of the variables. It obviously does not change eigenvalues. Flipping, transposition, and flipping-with-transposition give four common equivalent linearizations~\cite{taussky1959similarity}.
\end{remark}

\subsection{Algebraic Linearizations \label{sec:AlgebraicLinearization}}
An \textsl{algebraic linearization} $(\H,\D_H)$, as referred to in the title of this present note, is defined in \cite{chan2018algebraic} as a linearization
$(\H,\D_H)$ of a matrix polynomial $\mat{h}(z) = z\mat{a}(z)\mat{d}_0\mat{b}(z) + \mat{c}$ constructed recursively from linearizations $(\A,\mat{D}_A)$ and $(\B,\mat{D}_B)$ of the lower-grade component matrix polynomials $\mat{a}(z)$ and $\mat{b}(z)$, together with constant matrices $\mat{d}_0$ and $\mat{c}$. The paper~\cite{chan2018algebraic} did not give an explicit unimodular pair $(\E_H(z),\F_H(z))$ that reduces the linearization to diag$(z\mat{a}(z)\mat{d}_0\mat{b}(z)+\mat{c}_0, \I_{N-n})$, proving that the construction actually gave a linearization, so we give a method to construct them here.
Without loss of generality we take $\mat{d}_0 = \I_n$.
\begin{theorem}
If the $n$ by $n$ matrix polynomial $\mat{a}(z)$ has local linearization $(\A,\mat{D}_A)$ on $\Sigma_{\A}$ with nonsingular pair
$(\E_A(z), \F_A(z))$ and if the $n$ by $n$ matrix polynomial $\mat{b}(z)$ has local linearization $(\B,\mat{D}_B)$ on $\Sigma_{\B}$ with nonsingular pair
$(\E_B(z), \F_B(z))$
then the pencil $z\D_H-\H$ is a local linearization of $\mat{h}(z) = z\mat{a}(z)\mat{b}(z) + \C$ on $\Sigma_{\A} \cap \Sigma_{\B}$, where the matrices $\D_H$ and $\H$ are given as follows:
\begin{equation}
    	\D_H =
	\left[
		\begin{array}{ccc}
			\D_A & & \\
			& \I_n & \\
			& & \D_B
		\end{array}
	\right]
\end{equation}
and
\begin{equation}
\H =	\left[
		\begin{array}{ccc}
			\A & \mat{0}_{N_A,n} & -\mat{Y}_{A}\mat{c}\mat{X}_{B} \\
			-\mat{X}_{A} & \mat{0}_n & \mat{0}_{n,N_B} \\
			\mat{0}_{N_B,N_A} & -\mat{Y}_{B} & \mat{B}
		\end{array}
	\right]\>.
\end{equation}
Here $\X_A = [\I_n, 0, \ldots, 0]\F_A^{-1}(z)$
$\Y_A = \E_A^{-1}(z)[\I_n, 0, \ldots, 0]^T$ and likewise $\X_B =[\I_n, 0, \ldots, 0]\F_B^{-1}(z)$ and $\Y_B=\E_A^{-1}(z)[\I_n, 0, \ldots, 0]^T$ give the elements of the (generalized) standard triples for $\mat{a}(z)$ and $\mat{b}(z)$.
\end{theorem}
\begin{proof}
We first construct
\begin{equation}
    \E_1(z) = \begin{bmatrix}
        \E_A(z) & & \\
             &\I_n & \\
             &     &\E_B(z)
    \end{bmatrix}
\end{equation}
and
\begin{equation}
    \F_1(z) = \begin{bmatrix}
        \F_A(z) & & \\
             &\I_n & \\
             &     &\F_B(z)
    \end{bmatrix}\>.
\end{equation}
Applying them we get
\begin{equation}
\E_1(z)(z\D_H - \H)\F_1(z)    = \begin{bmatrix}
\mat{a}(z) & & & \E_A(z)\Y_A \mat{c} \X_B\F_B(z) & \\
      & \I_{N_A-n}& & & & & \\
\X_A\F_A(z) & & z\I_n& & \\
      &           & \E_B(z)\Y_B & \mat{b}(z) & \\
      &           &          &       & \I_{N_B-n}
    \end{bmatrix}\>.
\end{equation}
Simplifying and using the definitions of the matrices appearing in the standard triples, we get
\begin{equation}
\left[
    \begin{array}{cc|c|cc}
        \mat{a}(z) & & & \mat{c} & \\
        & \I_{N_A-n}& & &  \\
        \hline
        \I_n & & z\I_n& & \\
        \hline
        & & \I_n & \mat{b}(z) & \\
        & & & & \I_{N_B-n}
    \end{array}
\right]\>,
\end{equation}
which is permutationally equivalent to
\begin{equation}
    \left[
    \begin{array}{cc|cc|c}
        \mat{a}(z) & & \mat{c} & & \\
        & \I_{N_A-n} & & & \\
        \hline
        & & \mat{b}(z) & & \I_n \\
        & & & \I_{N_B - n} & \\
        \hline
        \I_n & & & &z\I_n
    \end{array}
    \right] \>.
\end{equation}
Adding to the third block column, the last block column multiplied by $-\mat{b}(z)$, we get
\begin{equation}
    \left[
    \begin{array}{cc|cc|c}
        \mat{a}(z) & & \mat{c} & & \\
        & \I_{N_A - n} & & & \\
        \hline
        & & 0 & & \I_n \\
        & & & \I_{N_B - n} & \\
        \hline
        \I_n & & -z\mat{b}(z) & & z\I_n
    \end{array}
    \right] \>,
\end{equation}
which is permutationally equivalent to
\begin{equation}
    \left[
    \begin{array}{cc|cc|c}
        \mat{a}(z) & \mat{c} & & & \\
        \I_n & -z\mat{b}(z) & & & z\I_n \\
        \hline
        & & \I_{N_A - n} & & \\
        & & & \I_{N_B - n} & \\
        \hline
        & & & & \I_n
    \end{array}
    \right] \>.
\end{equation}
Adding to the second block row, the last block row multiplied by $-z$, we get
\begin{equation}
    \left[
    \begin{array}{cc|cc|c}
        \mat{a}(z) & \mat{c} & & & \\
        \I_n & -z\mat{b}(z) & & & \\
        \hline
        & & \I_{N_A - n} & & \\
        & & & \I_{N_B - n} & \\
        \hline
        & & & & \I_n
    \end{array}
    \right] \>.
\end{equation}
Finally, take into account that
\begin{equation}
    \begin{bmatrix}
        \I_n & -\mat{a}(z) \\
        & \I_n
    \end{bmatrix}
    \begin{bmatrix}
        \mat{a}(z) & \mat{c} \\
        \I_n & -z\mat{b}(z)
    \end{bmatrix}
    \begin{bmatrix}
        z\mat{b}(z) & \I_n \\
        \I_n &
    \end{bmatrix}
    =
    \begin{bmatrix}
        z\mat{a}(z)\mat{b}(z) + \mat{c} & \\
        & \I_n
    \end{bmatrix} \>.
\end{equation}
This completes the proof.
\end{proof}
\begin{remark}
In the case that $\Sigma_{\A} = \Sigma_{\A}=\fieldF$, then the intersection is also $\fieldF$, and this establishes that ``algebraic linearizations'' are linearizations if their components are linearizations.
\end{remark}

\begin{remark}
    The generalized standard triple for the algebraic linearization given here has $\X_H = [0, 0, \X_B]$ and $\Y_H = [\Y_A^T, 0, 0]^T$. For these linearizations, there is no notion of expressing $1$ as a linear combination of anything, because this formulation is independent of particular polynomial bases, and indeed may use different bases for different submatrices.
\end{remark}

Algebraic linearizations offer a new class of linearizations.  In~\cite{chan2018algebraic} examples are given where the eigenvalue conditioning of such linearizations is \emph{exponentially} better than that of the Frobenius linearization; this demonstrates that this class of linearizations potentially offers  more numerically stable algorithms\footnote{Any algorithm that transforms a well-conditioned problem into an ill-conditioned one as a step along the way is likely to be a numerically unstable algorithm.} for computing matrix polynomial eigenvalues than standard linearizations do. The extent to which this is possible in general has not yet been explored: at this point, we only know that this \emph{can} happen for some cases.

The recursive construction of algebraic linearizations relies on generalized standard triples of each of the component matrix polynomials, and (as does the unrelated  paper~\cite{robol2017framework}) allows different polynomial bases to be used for each component. This present note provides some explicit formulas for generalized standard triples in various bases, for reference. As one reviewer points out, these formulas \textsl{could} simply be obtained by reading the proofs that these linearizations are indeed linearizations; one purpose of this paper is simply convenience.

\section{Expressing $1$ in the basis gives the triple\label{sec:universal}}
If the $\phi_k(z)$, $0 \le k \le \ell-1$ form a basis, we may express the polynomial $1$ in that basis: then  $1=\sum_{k=0}^{\ell-1}e_k\phi_k(z)$ defines the coefficients $e_k$ uniquely. Putting
\begin{equation}
	\X = 
    \begin{bmatrix}
		e_{\ell-1} & e_{\ell-2} & \cdots & e_{1} & e_{0}
	\end{bmatrix}     
\otimes \I_n
\end{equation}  
for an appropriate choice of basis always gives our generalized standard triple $\P^{-1}(z)=\X(z\C_1-\C_0)^{-1}\Y$ with 
\begin{equation}
\Y=
\begin{bmatrix}
	\I_n& \mat{0}_n & \mat{0}_n & \cdots & \mat{0}_n
\end{bmatrix}^T = \mat{e}_1 \otimes \I_n \>.
\end{equation}
[Here the notation~$\mat{e}_1$ means the first elementary vector: although printed in bold type, it does not look very different from the scalar $e_1$, not bold, printed earlier as a component of $\X$.]
We prove this below for all the elementary linearizations we use in this paper.  Moreover, if we replace $\Y$ above with $\mat{v} \otimes \I_n$ for a general anszatz column vector $\mat{v}$, then the theorem is also true for all the linearizations of~\cite{Fassbender2017} as well.
\begin{theorem}
Let $\P(z)$ be a regular matrix polynomial. Consider a linearization $\L(z)$ of $\P(z)$ such that
\begin{equation}
    \L(z)\left(\Ph_\ell(z) \otimes \I_m\right) = \left(e_1 \otimes \I_m\right)\P(z) \>,
\end{equation}
where $e_1 = \begin{bmatrix}1 & 0 & \cdots & 0\end{bmatrix}^T \in \mathbb{C}^{\ell}$ and $\Ph_{\ell}(z) = \begin{bmatrix}\phi_{\ell - 1}(z) & \cdots & \phi_0(z)\end{bmatrix}^T$. Let $x$ be a vector such that $x\Ph_\ell(z) = 1$ and define $\X = x \otimes \I_m$ and $\Y = e_1 \otimes \I_m$, then
\begin{equation}
    \P(z)^{-1} = \X\L(z)^{-1}\Y \>.
\end{equation}
\label{thm:maintheorem}
% A generalized standard triple for $\mat{P}(z)$ in the basis $\Ph$ is given as described above.
\end{theorem}

\begin{proof}
The following proof, which uses an idea of an anonymous referee, is simpler than our original one.  For each of the polynomial bases we examine in this paper, the linearization satisfies either
\begin{equation}
\L(z)\begin{bmatrix}
\phi_{\ell-1}(z)\mat{I}_n\\
\phi_{\ell-2}(z)\mat{I}_n\\
\vdots\\
\phi_{0}(z)\mat{I}_n
\end{bmatrix}
= \begin{bmatrix}
\mat{I}_n \\
\mat{0}_n \\
\vdots\\
\mat{0}_n
\end{bmatrix}\mat{P}(z)\>,
\end{equation}
for degree-graded bases, or similar statements for Bernstein bases and Lagrange and Hermite interpolational bases, as follows.  For the Bernstein basis, the
polynomial elements in the vector on the left are multiples of $B^{\ell-1}_{j}(z)$: 
$[\ell/1\cdot B^{\ell-1}_{\ell-1}(z), \ell/2\cdot B^{\ell-1}_{\ell-1}(z), \ldots, \ell/\ell\cdot B^{\ell-1}_{0}(z)]^T$. For the Lagrange basis, the vector on the left is $[w(z), \ell_0(z), \ell_1(z), \ldots, \ell_{\ell}(z)]^T$. For the Hermite interpolational basis\footnote{The paper~\cite{corless2021equivalence} did not prove that the Hermite companion pencil discussed here is in fact a linearization. We believe that it is, but there is no proof published yet.}, it is the same as for the Lagrange but with the Lagrange basis elements replaced with the Hermite interpolational basis elements.

This offers an explicit slight extension of the results of~\cite{Fassbender2017}, and their use of a general \emph{anszatz} vector $\mat{v}$ generalizes our results, in the following way:
They consider the set of all \emph{matrix pencils} $\L(z)$ satisfying
\begin{equation}
     \L(z)\left( \Ph \otimes \I_n\right) = \mat{v} \otimes \P(z)
\end{equation}
and later characterize just which of these matrix pencils are linearizations.  They did not explicitly consider Lagrange or Hermite interpolational basis polynomials, or the Bernstein basis, but as they point out their proofs go through unchanged for matrix pencils where $z$ appears only on the diagonal of the matrix pencil, as it does for all the cases we consider here except the Bernstein case.  Their use of the general anszatz vector (all the examples we have considered just use $\mat{v} = \mat{e}_1$) extends our result to the case $\Y = \mat{v} \otimes \I_n$.

Premultiplying by ${\L}^{-1}(z)$ and post-multiplying by $\mat{P}^{-1}(z)$, we have
\begin{equation}
{\L}^{-1}(z)\begin{bmatrix}
\mat{v} \\
\mat{0}_n \\
\vdots\\
\mat{0}_n
\end{bmatrix}
= \begin{bmatrix}
\phi_{\ell-1}(z)\mat{I}_n\\
\phi_{\ell-2}(z)\mat{I}_n\\
\vdots\\
\phi_{0}(z)\mat{I}_n
\end{bmatrix}\mat{P}^{-1}(z)\>.    
\end{equation}
If $1 = \sum_{k=0}^{\ell-1} e_k \phi_k(z)$ is the expression of $1$ in that basis, then premultiplying both sides by
\begin{equation*}
\X = \begin{bmatrix}
e_{\ell-1}\mat{I}_n & e_{\ell-2}\mat{I}_n 
& \ldots & e_{0}\mat{I}_n
\end{bmatrix}
\end{equation*}
gives the theorem.  Compare also Remark~\ref{rem:readoff} which gives another formula for $\X$ and $\Y$.

Note that in the Bernstein, Lagrange, and Hermite interpolational cases, $1$ can be expressed as a linear combination of the elements given; for Lagrange and Hermite the coefficient of $w(z)$ is $0$.
\end{proof}

\begin{remark}
    There are linearizations not explicitly considered in this paper; for instance, a referee has pointed out that when a matrix polynomial is expressed in a basis where the elements satisfy a linear recurrence, then there is an automatic way to build what is called a CORK linearization.  See~\cite{guttel2017nonlinear} and~\cite{van2015linearization} for details.  
\end{remark}

In what follows we examine specific cases in detail and supply specific proofs for each basis.  Indeed, much of the utility of this paper is simply writing down those details, which will allow easier programming for the uses of these generalized standard triples.

%%%%%%%%%%%%%%%%%%%%%%%%%%%%%%%%%%%%%%%
\section{Scalar examples of generalized standard triples} \label{triples-sec}
In this section, we tabulate generalized standard triples for four classes of linearizations. 
%We do so by scalar examples, leaving the matrix polynomial case to Section~\ref{sec:mpolyexamples}. 
% In contrast, in Section~\ref{proofs-sec} where we gave proofs, we do so in full generality.

%\subsection{Companion matrices} \label{linearization-subsec}
In the special case $n = 1$ and when the monomial basis is used, a linearization is usually simplified by dividing by the leading coefficient, making the result monic and the second matrix of the pair just becomes the identity.  The remaining matrix is called a ``companion matrix'' or Frobenius companion\footnote{The \textsl{Frobenius form} of a matrix is related, but different: see for instance~\cite{storjohann1998frobenius}.}. Thus finding roots of a scalar polynomial can be done by finding eigenvalues of the companion matrix. 
%In the monic case, $\C_{1}$ becomes the identity matrix and the generalized eigenproblem becomes a standard eigenproblem. 
Kublanovskaya calls these ``accompanying pencils'' in~\cite{kublanovskaya1999methods}.
%For bases other than the monomial, the unfortunate nomenclature ``colleague matrix'' or ``comrade matrix'' is also used. This nomenclature hinders citation search and we prefer ``generalized companion'', if a distinction is needed. See~\cite{mackey2015polynomial}.

Construction of a linearization from a companion matrix is, when possible at all, a simple matter of the Kronecker (tensor) product: given $\C_{1}$, $\C_{0} \in \mathbb{C}^{n \times n}$, take $\widetilde{\C_{1}} = \C_{1} \otimes \I_{n}$ and then replace each block $p_k\I_{n}$ with the corresponding matrix coefficient $\cof{k} \in \mathbb{C}^{r \times r}$ (the first $p_k$, in $p_k \I_{n}$, is the symbolic coefficient from $p(z) = \sum_{k = 0}^{\ell} p_k\phi_{k}(z)$; the matrix coefficient $\cof{k} \in \mathbb{C}^{r \times r}$ is from $\P(z) = \sum_{k = 0}^{\ell}\cof{k}\phi_{k}(z).)$ This will be clearer by example.

\subsection{Bases with three-term recurrence relations} \label{recur-subsec}
The monomial basis, the shifted monomial basis, the Taylor basis, the Newton interpolational bases, and many common orthogonal polynomial bases all have three-term recurrence relations that, except for initial cases, can be written 
\begin{equation}
	z\phi_{k}(z) = \alpha_{k}\phi_{k+1}(z) + \beta_{k}\phi_{k}(z) + \gamma_{k}\phi_{k-1}(z) \>.
\end{equation}
In all cases, we have $\alpha_{k}\ne 0$.
For instance, the Chebyshev polynomial recurrence is usually written $T_{n+1}(z) = 2zT_n(z) - T_{n-1}(z)$ but is easily rewritten in the above form by isolating $zT_n(z)$, and all Chebyshev $\alpha_k = 1/2$ for $k>1$.
We give a selection in Table~\ref{tab:shortlist}, and refer the reader to section 18.9 of the Digital Library of Mathematical Functions (\url{dlmf.nist.gov}) for more. See also \cite{gautschi2016orthogonal}.

\begin{table}
	\centering
    \begin{tabular}{|c|c|c|c|c|c|c|}
    \hline
    	$\phi_{k}(z)$ & Name & $\alpha_{k}$ & $\beta_{k}$ & $\gamma_{k}$ & $\phi_{0}$ & $\phi_{1}$ \\
        \hline
        $z^{k}$ & monomial & $1$ & $0$ & $0$ & $1$ & $z$ \\
        \hline
        $(z - a)^{k}$ & shifted monomial & $1$ & $a$ & $0$ & $1$ & $z - a$ \\
        \hline
        $\sfrac{(z - a)^{k}}{k!}$ & Taylor & $n + 1$ & $a$ & $0$ & $1$ & $z - a$ \\
        \hline
        $\prod_{j=0}^{k-1}(z - \tau_{j})$ & Newton interpolational & $1$ & $\tau_{n}$ & $0$ & $1$ & $z - \tau_{0}$ \\
        \hline
        $T_{k}(z) = \cos\left(k\cos^{-1}(z)\right)$ & Chebyshev & $\sfrac{1}{2}$ & $0$ & $\sfrac{1}{2}$ & $1$ & $z$ \\
        \hline
        $P_{k}(z)$ & Legendre & $\sfrac{(k+1)}{(2k+1)}$ & $0$ & $\sfrac{k}{(2k+1)}$ & $1$ & $z$ \\
        \hline
    \end{tabular}
    \caption{\label{tab:shortlist}A short list of three-term recurrence relations for some important polynomial bases discussed in Section~\ref{recur-subsec}. For a more comprehensive list, see The Digital Library of Mathematical Functions. These relations and others are coded in Walter Gautschi's packages OPQ and SOPQ~\cite{gautschi2016orthogonal} and in the \texttt{MatrixPolynomialObject} implementation package in Maple (see~\cite{Jeffrey:HLA:2013}).}
\end{table}
For all such bases, we have the linearization\footnote{For exposition, we follow Peter Lancaster's dictum, namely that the $5\times 5$ case almost always gives the idea.}
\begin{equation}
    z\C_1 - \C_0 =
    z\left[\begin{array}{c|cccc}
    	\dfrac{\scof{5}}{\alpha_{4}} & & & & \\
    \hline
        & 1 & & & \\
        & & 1 & & \\
        & & & 1 & \\
        & & & & 1
    \end{array}\right] -
    \left[\begin{array}{c|cccc}
    	-\scof{4} + \dfrac{\beta_{4}}{\alpha_{4}}\scof{5} & -\scof{3} + \dfrac{\gamma_{4}}{\alpha_{4}}\scof{5} & -\scof{2} & -\scof{1} & -\scof{0} \\
    	\hline
        \alpha_{3} & \beta_{3} & \gamma_{3} & & \\
        & \alpha_{2} & \beta_{2} & \gamma_{2} & \\
        & & \alpha_{1} & \beta_{1} & \gamma_{1} \\
        & & & \alpha_{0} & \beta_{0}
    \end{array}\right] \>,
\end{equation}
% \begin{align}
% 	\C_{1} &=
%       \left[\begin{array}{c|cccc}
%     	\dfrac{\scof{5}}{\alpha_{4}} & & & & \\
%     \hline
%         & 1 & & & \\
%         & & 1 & & \\
%         & & & 1 & \\
%         & & & & 1
%     \end{array}\right] \>, \\
%     \C_{0} &=
%       \left[\begin{array}{c|cccc}
%     	-\scof{4} + \dfrac{\beta_{4}}{\alpha_{4}}\scof{5} & -\scof{3} + \dfrac{\gamma_{4}}{\alpha_{4}}\scof{5} & -\scof{2} & -\scof{1} & -\scof{0} \\
%     	\hline
%         \alpha_{3} & \beta_{3} & \gamma_{3} & & \\
%         & \alpha_{2} & \beta_{2} & \gamma_{2} & \\
%         & & \alpha_{1} & \beta_{1} & \gamma_{1} \\
%         & & & \alpha_{0} & \beta_{0}
%     \end{array}\right] \>,
% \end{align}
(remember that $\alpha_k \ne 0$) and
\begin{align}
	\X &= 
    \begin{bmatrix}
    	0 & 0 & 0 & 0 & 1
    \end{bmatrix} \>, \\
    \Y &=
    \begin{bmatrix}
    	1 & 0 & 0 & 0 & 0
    \end{bmatrix}^{\mathrm{T}} \>.
\end{align}
% For instance, a \textit{flipped and transposed} linearization of this class for the Chebyshev case is\footnote{For the matrix polynomial case, each $\boldsymbol{P}_k$ would be transposed. If we had started with $\mat{P}^T(z)$ then by flipping and transposing we get back to a linearization for $\mat{P}$.}
% \begin{equation}
% 	\L(z) =
%     \begin{bmatrix}
%     	z & -\dfrac{1}{2} & & & \scof{0} \\
%         -1 & z & -\dfrac{1}{2} & & \scof{1} \\
%         & -\dfrac{1}{2} & z & -\dfrac{1}{2} & \scof{2} \\
%         & & -\dfrac{1}{2} & z & \scof{3} + \scof{5} \\
%         & & & -\dfrac{1}{2} & 2z\scof{5} + \scof{4}
%     \end{bmatrix}
% \end{equation}
% has flipped and transposed $\X = \begin{bmatrix} 0 & 0 & 0 & 0 & 1 \end{bmatrix}$, $\Y = \begin{bmatrix}1 & 0 & 0 & 0 & 0\end{bmatrix}^{\mathrm{T}}$. 

For instance, a Newton interpolational basis on the nodes $\tau_{0}$, $\tau_{1}$, $\ldots$, $\tau_{5}$ has the linearization for matrix polynomials of this grade,
% \begin{equation}
% 	z
%     \begin{bmatrix}
%     	\scof{5} & & & & \\
%         & 1 & & & \\
%         & & 1 & & \\
%         & & & 1 & \\
%         & & & & 1
%     \end{bmatrix}
%     -
%     \begin{bmatrix}
%     	-\scof{4} + \tau_{4}\scof{5} & -\scof{3} & -\scof{2} & -\scof{1} & -\scof{0} \\
%         1 & \tau_{3} & & & \\
%         & 1 & \tau_{2} & & \\
%         & & 1 & \tau_{1} & \\
%         & & & 1 & \tau_{0}
%     \end{bmatrix} \>.
% \end{equation}
% The corresponding linearization for matrix polynomials of this grade is
\begin{equation}
	z
    \begin{bmatrix}
    	\P_5 & & & & \\
        & \I_{n} & & & \\
        & & \I_{n} & & \\
        & & & \I_{n} & \\
        & & & & \I_{n}
    \end{bmatrix}
    -
    \begin{bmatrix}
    	-\P_{4} + \tau_{4}\P_{5} & -\P_{3} & -\P_{2} & -\P_{1} & -\cof{0} \\
        \I_{n} & \tau_{3}\I_{n} & & & \\
        & \I_{n} & \tau_{2}\I_{n} & & \\
        & & \I_{n} & \tau_{1}\I_{n} & \\
        & & & \I_{n} & \tau_{0}\I_{n}
    \end{bmatrix} \>.
\end{equation}

\subsection{The Bernstein basis} \label{bernstein-subsec}
The set of polynomials $\{B_{k}^{\ell}(z)\}_{k=0}^{\ell} = \binom{\ell}{k}z^k(1-z)^{\ell-k}$ is a set of $\ell+1$ polynomials each of exact degree $\ell$ that together forms a basis for polynomials of grade~$\ell$. 
Bernstein polynomimals have many applications, for example in Computer Aided Geometric Design (CAGD), and many important properties including that of optimal condition number over all bases positive on $\left[0, 1\right]$. They do not satisfy a simple three term recurrence relation of the form discussed in Section~\ref{recur-subsec}, although they satisfy an interesting and useful ``degree-elevation'' recurrence, namely
\begin{equation}
     (j+1)B_{j+1}^n(z) + (n-j)B_{j}^n(z) = n B_{j}^{n-1}(z)\>,
\end{equation}
which specifically demonstrates that a sum of Bernstein polynomials of degree~$n$ might actually have degree strictly less than~$n$.
See~\cite{farouki2012bernstein},~\cite{farouki1996optimal}, and~\cite{farouki1987numerical} for more details of Bernstein bases.

A Bernstein companion pencil for $p_{5}(z) = \sum_{k=0}^{5}\scof{k}B_{k}^{5}(z)$ is
\begin{align}
	z\C_{1} - \C_0 &=
    z\begin{bmatrix}
    	 - \scof{4}+\dfrac{1}{5}\scof{5} & -\scof{3} & -\scof{2} & -\scof{1} & -\scof{0} \\
        1 & \dfrac{2}{4} & & & \\
        & 1 & \dfrac{3}{3} & & \\
        & & 1 & \dfrac{4}{2} & \\
        & & & 1 & \dfrac{5}{1}
    \end{bmatrix} -
    \begin{bmatrix}
    	-\scof{4} & -\scof{3} & -\scof{2} & -\scof{1} & -\scof{0} \\
        1 & 0 & & & \\
        & 1 & 0 & & \\
        & & 1 & 0 & \\
        & & & 1 & 0
    \end{bmatrix} \>, \\
    \X &= 
    \begin{bmatrix}
    	\dfrac{1}{5} & \dfrac{2}{5} & \dfrac{3}{5} & \dfrac{4}{5} & \dfrac{5}{5}
    \end{bmatrix} \>, \\
    \Y &=
    \begin{bmatrix}
    	1 & 0 & 0 & 0 & 0
    \end{bmatrix}^{\mathrm{T}} \>.
\end{align}
For a construction of rational $\E(z)$ and $\F(z)$ that show this is a local linearization (unless $z=1$ is an eigenvalue), see~\cite{amiraslani2008linearization}. The paper~\cite{mackey2016linearizations} goes further and gives a general method of constructing all strong linearizations for Bernstein matrix polynomials, including this one (which they show is strictly equivalent to the second companion form for the monomial basis, without any restrictions on $z$ or on the leading coefficient). 
%This is a \emph{strong} linearization, as we will explicitly see shortly.
We have $p^{-1}(z) = \X(z \C_{1} - \C_{0})^{-1}\Y$ if $p(z) \neq 0$. This pencil was first analyzed in~\cite{jonsson2001eigenvalue} and~\cite{jonsson2004solving}. One of the present authors independently invented and implemented a version of this companion pencil in Maple (except using $\P^{\mathrm{T}}(z)$, and flipped from the above form) in about $2004$. 
For a proof of numerical stability, see the original thesis~\cite{jonsson2001eigenvalue}.  
%We supply another proof in section~\ref{proofs-sec}. 
The standard triple is, we believe, new to this paper.

\begin{example}[Singular leading coefficient case]
\begin{align} \label{eqn:BernsteinNonMonicExample}
\P(z) &= 
\begin{bmatrix}
	\sfrac{29}{100}  & -\sfrac{8}{25} \\
	\sfrac{7}{10} & -\sfrac{1}{100}
\end{bmatrix}
B^{3}_{0}(z) +
\begin{bmatrix}
	-\sfrac{41}{50}  & \sfrac{41}{100} \\
	-\sfrac{7}{10} & \sfrac{91}{100}
\end{bmatrix}
B^{3}_{1}(z) \\ \nonumber &+
\begin{bmatrix}
	\sfrac{9}{10}  & \sfrac{19}{100} \\
	\sfrac{4}{5} & \sfrac{22}{25}
\end{bmatrix}
B^{3}_{2}(z) +
\begin{bmatrix}
    1  & 1 \\
	\sfrac{9851}{1980} & 0
\end{bmatrix}
B^{3}_{3}(z) \>.
\end{align}
Expressing Equation~\eqref{eqn:BernsteinNonMonicExample} into the monomial basis, we have
\begin{equation*}
    \P(z) = 
    \begin{bmatrix}
        {\sfrac{29}{100}}&-{\sfrac{8}{25}}\\
        {\sfrac{7}{10}}&-{\sfrac{1}{100}}
    \end{bmatrix} +
    \begin{bmatrix}
        {\sfrac{29}{100}}&-{\sfrac{8}{25}}\\
        {\sfrac{7}{10}}&-{\sfrac{1}{100}}
    \end{bmatrix} z +
    \begin{bmatrix}
        {\sfrac{849}{100}}&-{\sfrac{57}{20}}\\
        {\sfrac{87}{10}}&-{\sfrac{57}{20}}
    \end{bmatrix} z^2 +
    \begin{bmatrix}
        -{\sfrac{89}{20}}&{\sfrac{99}{50}}\\
        -{\sfrac{89}{396}}&\sfrac{1}{10}
    \end{bmatrix} z^3 \>.
\end{equation*}
Taking the determinant of the leading coefficient
\begin{equation*}
    \det\left(
    \begin{bmatrix}
        -{\sfrac{89}{20}}&{\sfrac{99}{50}}\\
        -{\sfrac{89}{396}}&\sfrac{1}{10}
    \end{bmatrix}
    \right)
    = \left(-{\sfrac{89}{20}}\right) \left(\sfrac{1}{10}\right) - \left({\sfrac{99}{50}}\right) \left(-{\sfrac{89}{396}}\right) = 0 \>,
\end{equation*}
we can observe that leading coefficient is singular, and thus, this matrix polynomial is non-monic. The standard triple for Equation~\eqref{eqn:BernsteinNonMonicExample} is
\begin{gather*}
	\C_{0} =
	 \begin{bmatrix}
	 	-{\sfrac{9}{10}}&-{\sfrac{19}{100}}&{\sfrac{41}{50}}&-{\sfrac{41}{100}}&-{\sfrac{29}{100}}&{\sfrac{8}{25}}\\
	 	-\sfrac{4}{5}&-{\sfrac{22}{25}}&{\sfrac{7}{10}}&-{\sfrac{91}{100}}&-{\sfrac{7}{10}}&{\sfrac{1}{100}}\\
	 	1&0&0&0&0&0\\
	 	0&1&0&0&0&0\\
	 	0&0&1&0&0&0\\ 
	 	0&0&0&1&0&0
	\end{bmatrix} \quad
	\C_{1} =
	\begin{bmatrix}
		-{\sfrac{17}{30}}&{\sfrac{43}{300}}&{\sfrac{41}{50}}&-{\sfrac{41}{100}}&-{\sfrac{29}{100}}&{\sfrac{8}{25}}\\
		{\sfrac{5099}{5940}}&-{\sfrac{22}{25}}&{\sfrac{7}{10}}&-{\sfrac{91}{100}}&-{\sfrac{7}{10}}&{\sfrac{1}{100}}\\
		1&0&1&0&0&0\\ 
		0&1&0&1&0&0\\
		0&0&1&0&3&0\\ 
		0&0&0&1&0&3
	\end{bmatrix} \\
	\X = 
	\begin{bmatrix}
		\sfrac{1}{3}&0&\sfrac{2}{3}&0&1&0\\
		0&\sfrac{1}{3}&0&\sfrac{2}{3}&0&1
	\end{bmatrix} \quad
	\Y =
	\begin{bmatrix}
		1&0\\ 
		0&1\\ 
		0&0\\ 
		0&0\\ 
		0&0\\ 
		0&0
	\end{bmatrix} \>.
\end{gather*}
Then, 
\begin{equation*}
	\X\left(z\C_{1} - \C_{0}\right)^{-1}\Y\P(z) = \I_{2} \>.
\end{equation*}

\end{example}

% Expansion to the matrix polynomial case merely requires the appropriate tensor product.
\subsection{The Lagrange interpolational basis} \label{lagrange-subsec}
There are by now several Lagrange basis linearizations. The use of barycentric forms means that Lagrange interpolation is efficient and numerically stable and is increasing in popularity~\cite{trefethen}. Here is the definition of the first barycentric form for interpolation of polynomials of grade~$\ell$ on the $\ell+1$ distinct nodes $\tau_k \in \mathbb{C}$, $0 \le k \le \ell$.  Take the partial fraction decomposition of the reciprocal of the node polynomial
\begin{equation}
    w(z) = \prod_{k=0}^{\ell} (z - \tau_k)\>,
\end{equation}
namely
\begin{equation}
    \frac{1}{w(z)} = \sum_{k=0}^{\ell} \frac{\beta_k}{z-\tau_k}
\end{equation}
where the coefficients $\beta_k$ occurring in the partial 
fraction decomposition are called the \emph{barycentric weights}. A well-known explicit formula for the $\beta_k$ is
\begin{equation}
	\beta_{k} = \prod_{\substack{j = 0 \\ j \neq k}}^{\ell}(\tau_{k} - \tau_{j})^{-1} \>.
\end{equation}
The Lagrange basis polynomials are normally written
\begin{equation}
    \ell_k(z) = \beta_k \prod_{\substack{j = 0 \\ j \neq k}}^{\ell}(z - \tau_{j}) \>.
\end{equation}

For many sets of nodes (Chebyshev nodes on $[-1, 1]$, or roots of unity on the unit disk), the resulting interpolant is also well-conditioned, and can even be ``better than optimal''~\cite{corless2004bernstein}, see also \cite{carnicer2017optimal}. The pencil we use here is ``too large'' and has (numerically harmless in our experience) spurious roots at infinity\footnote{This numerical harmlessness needs some explanation. In brief, Lagrange basis matrix polynomial eigenvalues will be well-conditioned only in a compact region determined by the interpolation nodes, and are increasingly ill-conditioned towards infinity; in practice this means only small changes in the data are needed to perturb large finite ill-conditioned eigenvalues out to infinity.  Any eigenvalues produced numerically that are well outside the region determined by the interpolation nodes are likely easily perturbed all the way to infinity, and can be safely ignored.}; for alternative formulations, see \cite{van2015linearization}, \cite{nakatsukasa2017vector}. Then, the pencil is $z \C_{1} - \C_{0}$ where
\begin{equation}
	z\C_{1} - \C_{0} =
    z\begin{bmatrix}
    	0 & & & & \\
        & 1 & & & \\
        & & 1 & & \\
        & & & 1 & \\
        & & & & 1
    \end{bmatrix} -
    \begin{bmatrix}
    	0 & -\rho_{0} & -\rho_{1} & -\rho_{2} & -\rho_{3} & -\rho_{4} \\
        \beta_{0} & \tau_{0} & & & & \\
        \beta_{1} & & \tau_{1} & & & \\
        \beta_{2} & & & \tau_{2} & &\\
        \beta_{3} & & & & \tau_{3} & \\
        \beta_{4} & & & & & \tau_{4}
    \end{bmatrix} \>.
\end{equation}
Rational matrices $\E$ and $\F$ demonstrating that this is indeed a local linearization, except when the nodes are eigenvalues, can be found in the appendix of~\cite{amiraslani2008linearization}. 
More is true: by a block permutation argument, one can find constant matrices $\E_k$ and $\F_k$ with determinant $\pm 1$ such that $\E_k \L(\tau_k) \F_k = \diag( \P_k, \I_{N-n})$, for $0 \le k \le \ell$.  Interpolating these matrices polynomially (indeed, we already know the correct barycentric weights) to form polynomial matrices $\E(z)$ and $\F(z)$ we have $\E(z) \L(z) \F(z) = \diag(\P(z), \I_{N-m})$, and moreover because the determinants are $\pm1$ exactly at the nodes (and therefore nonzero in certain small neighbourhoods of the nodes) we see that $\L(z)$ and $\P(z)$ are equivalent on a set $\Sigma$ (not necessarily simply connected) that contains the nodes $\tau_k$.  Since $\L(z)$ and $\P(z)$ are also equivalent on a set that contains everything \emph{except} the nodes, we see by Proposition 2.1 of~\cite{Dopico2020} that $\L(z)$ is a linearization of $\P(z)$.

The $\X$ and $\Y$ for the standard triple are
\begin{align}
	\X &= 
    \begin{bmatrix}
    	0 & 1 & 1 & 1 & 1 & 1 
    \end{bmatrix} \>, \\
    \Y &=
    \begin{bmatrix}
    	1 & 0 & 0 & 0 & 0 & 0
    \end{bmatrix}^{\mathrm{T}} \>.
\end{align}
Notice in this case that for the linearization $N = (\ell+2)n$ while $\mathrm{deg}\ p \leq \ell$, and therefore, there are at least $2n$ eigenvalues at infinity. This can be inconvenient if $n$ is at all large.

\subsection{Hermite interpolational basis} \label{hermite-subsec}
The Lagrange linearization of the previous section has been extended to Hermite interpolational bases, where some of the nodes have ``flowed together'', collapsing to fewer distinct nodes\footnote{A formal definition can be found in~\cite{corless2013polynomial}, for instance. The essential idea is that given two distinct pieces of data $(\tau_k,p(\tau_k))$ and $(\tau_{k+1},p(\tau_{k+1}))$, we also know the forward difference $(p_{k+1}-p_k)/(\tau_{k+1}-\tau_k)$.  In the limit as one node approaches (flows towards) the other, we still know two pieces of information: $p(\tau_k)$ and $p'(\tau_k)$.  Hermite interpolation captures this idea.}. 

We suppose that at each remaining distinct node $\tau_{i}$, $0 \le i \le N-1$, say, there are now $s_{i} \geq 1$ consecutive pieces of information known, namely $\P(\tau_i)$, $\P'(\tau_i)/1!$, $\P''(\tau_i)/2!$, and so on up to the last one, the value of the $s_i-1$-th derivative at $z=\tau_i$, namely $\P^{(s_i-1)}(\tau_i)/(s_i-1)!$. The integer $s_{i}$ is called the \textit{confluency} of the node. The known pieces of information are the local Taylor coefficients of the polynomial fitting the data:
\begin{equation}
	\rho_{i, j} = \dfrac{f^{(j)}(\tau_{i})}{j!} \>, \quad 0 \leq j \leq s_{i} - 1 \>.
\end{equation}
This gives $1+\ell = \sum s_i$ pieces of information, determining a polynomial of grade~$\ell$.
The barycentric weights, this time doubly indexed as $\beta_{i,j}$, are again computed from the partial fraction decomposition of the reciprocal of the node polynomial
\begin{equation}
    \frac{1}{w(z)} = \frac{1}{\prod_{i=0}^{N-1} \left(z-\tau_i\right)^{s_i}} = \sum_{i=0}^{N-1} \sum_{j=0}^{s_i-1} \frac{\beta_{i,j}}{(z-\tau_i)^{j+1}}\>.
    \label{eq:Hermitenodepoly}
\end{equation}
For evaluation of the interpolating polynomial, one should use the first or second barycentric form; see~\cite{corless2013polynomial} for details.  For theoretical work with the Hermite interpolational bases, however, we can define
\begin{equation}
    H_{i,j}(z)
           = \sum_{k=0}^{s_i-1-j} \beta_{i,j+k} w(z)(z-\tau_i)^{-k-1} \label{eq:ExplicitHermiteForm2}\>.
\end{equation}
These polynomials, each of degree~$\ell$, form a basis (a Hermite interpolational basis, to distinguish from the Hermite orthogonal polynomials) for polynomials of grade~$\ell$; moreover, they generalize the Lagrange property in that only one Taylor coefficient at only one node is $1$ and all the rest are zero.

Note that the derivative $\P'(z)$ of a matrix polynomial is a straightforward extension to matrices of the ordinary derivative.  It is isomorphic to the matrix with entries that are the ordinary derivatives of the original matrix.

The linearization of the previous section changes to the following elegant form. The matrix $\C_{1}$ is unchanged,
\begin{equation}
	\C_{1} =
    \begin{bmatrix}
    	0 & & & & \\
        & 1 & & & \\
        & & \ddots & & \\
        & & & 1 & \\
        & & & & 1
    \end{bmatrix} \>,
\end{equation}
being $(\ell + 2)$ by $(\ell + 2)$ as before.
%, although now
%\begin{equation}
%	\ell = -1 + \sum_{i=0}^{m} s_{i}
%\end{equation}
%is the grade of the resulting polynomial. 
The matrix $\C_{0}$ changes, picking up transposed Jordan-like blocks for each distinct node. For instance, suppose we have two distinct nodes, $\tau_{0}$ and $\tau_{1}$. Suppose further that $\tau_{0}$ has confluency $s_{0} = 3$ while $\tau_{1}$ has confluency $s_{1} = 2$. This means that we know $f(\tau_{0})$, $\sfrac{f'(\tau_{0})}{1!}$, $\sfrac{f''(\tau_{0})}{2!}$, $f(\tau_{1})$ and $\sfrac{f'(\tau_{1})}{1!}$. Then,
\begin{equation}
	\C_{0} =
    \left[
    \begin{array}{cccccc}
    	0 & -\sfrac{f''(\tau_{0})}{2!} & -\sfrac{f'(\tau_{0})}{1!} & -f(\tau_{0}) & -\sfrac{f'(\tau_{1})}{1!} & -f(\tau_{1}) \\
        \cdashline{2-4}
        \beta_{02} & \multicolumn{1}{:c}{\tau_{0}} & & \multicolumn{1}{c:}{ }& & \\
        \beta_{01} & \multicolumn{1}{:c}{1} & \tau_{0}  & \multicolumn{1}{c:}{ } & & \\
        \beta_{00} & \multicolumn{1}{:c}{ } & 1 & \multicolumn{1}{c:}{\tau_{0}} & & \\
        \cdashline{2-6}
        \beta_{11} & & & & \multicolumn{1}{:c}{\tau_{1}} & \multicolumn{1}{c:}{} \\
        \beta_{10} & & & & \multicolumn{1}{:c}{1} & \multicolumn{1}{c:}{\tau_{1}} \\
        \cdashline{5-6}
    \end{array}
    \right] \>.
\end{equation}
Note the reverse ordering of the derivative values in this formulation.

The matrices $\E(z)$ and $\F(z)$ demonstrating that this is indeed a local linearization have, so far as we know, not been noted in the literature.  However, they are exactly the same as those for the Lagrange basis, mutatis mutandis, %Equations~\eqref{eq:ELagrange} and~\eqref{eq:FLagrange}, 
which are discussed in the appendix to~\cite{amiraslani2008linearization},with appropriately modified meanings for $\phi$ and $\D$.  The new $\phi$ contains the Hermite interpolational bases in Equation~\eqref{eq:ExplicitHermiteForm2},
and now $\D$ is not diagonal, but rather block diagonal with the transposed Jordan-like blocks above.  Both (rational) matrices are still unimodular.  Again we have $\E(z)(z\C_1-\C_0)\F(z) = \diag(\P(z), \I_n, \ldots, \I_n)$, everywhere except the nodes.

To generalize the argument of the previous section that the companion pencil is equivalent also at the nodes requires more work than the Lagrange case does, because the Jordan-like block structure interferes with the perturbation argument; on the other hand, one only needs the \emph{highest derivatives} at each node to be nonsingular for the construction to work.  We leave the details for another paper.  

We believe that the strict equivalence proof for the Lagrange basis in~\cite{corless2021equivalence} can be extended to the Hermite case as well, but this has not yet been carried out.

For the standard triple, take in the scalar case
\begin{equation}
	\Y =
    \begin{bmatrix}
    	1 & 0 & \cdots & 0
    \end{bmatrix}^{\mathrm{T}}
\end{equation}
but for $\X$ take the coefficients of the expansion of the polynomial $1$ in this particular Hermite interpolational basis: it is equal to $1$ at each node but has all derivatives zero at each node. That is, put
\begin{equation}
	\begin{cases}
    	\rho_{ij} = 1 \quad &\text{if } j = 0 \>, \\
        0 \quad &\text{otherwise} \>,
    \end{cases}
\end{equation}
and sort them in order:
\begin{equation}
	\X =
    \begin{bmatrix}
    	0 & \rho_{0, s_{0} - 1} & \rho_{0, s_{0} - 2} & \cdots & \rho_{0, 0} & \rho_{1, s_{1} - 1} & \cdots & \rho_{n, 0}
    \end{bmatrix} \>.
\end{equation}
For the earlier instance (two nodes, of confluency 3 and 2, respectively),
\begin{equation}
	\X = 
    \begin{bmatrix}
    	0 & \smash[b]{\underbrace{\begin{matrix}0 & 0 & 1 \end{matrix}}_\text{for $\tau_{0}$}} & \smash[b]{\underbrace{\begin{matrix}0 & 1\end{matrix}}_\text{for $\tau_{1}$}}
    \end{bmatrix} \>.
\end{equation}
Then,
\begin{equation}
	p^{-1}(z) = \X(z\C_{1} - \C_{0})^{-1}\Y \>.
\end{equation}

\begin{remark}
	We may re-order the nodes in any fashion we like, and each ordering generates its own linearization (both Hermite and Lagrange). We may also find a linearization where the confluent data is ordered $p(\tau_{i})$, $\sfrac{p'(\tau_{i})}{1!}$, $\sfrac{p''(\tau_{i})}{2!}$, etc., although we have not done so.
    
    If there is just one node of confluency $\ell$, we recover the standard Frobenius companion (plus two infinite roots):
    \begin{equation}
    	\begin{bmatrix}
        	0 & & & & \\
            & 1 & & & \\
            & & \ddots & & \\
            & & & 1 & \\
            & & & & 1
        \end{bmatrix}
        \quad , \quad
        \begin{bmatrix}
        	0 & -\scof{\ell-1} & -\scof{\ell-2} & \cdots & -\scof{1} & -\scof{0} \\
            1 & \tau_{0} & & & & \\
            0 & 1 & \tau_{0} & & & \\
            0 & & 1 & \ddots & & \\
            \vdots & & & \ddots & \tau_{0} & \\
            0 & & & & 1 & \tau_{0}
        \end{bmatrix}\>.
    \end{equation}
    Here, $\scof{k} = \sfrac{p^{(k)}(\tau_{0})}{k!}$ is the ordinary coefficient in the expansion $p(z) = \sum_{k=0}^{\ell} \scof{k}(z - \tau_{0})^{k}$. The numerical stability of these Hermite interpolational linearization has been studied briefly \cite{lawrence2012numerical} but much remains unknown. We confine ourselves in this paper to the study of the standard triple.
    
    % Note that the Lagrange case $\X$, $\X = \begin{bmatrix} 0 & 1 & \cdots & 1\end{bmatrix}$, fits the Hermite pattern here also: the coefficients in the expansion of $p(z) = 1$, namely $\rho_{i} = 1$, appear in the vector. We will see why.
\end{remark}
To make a linearization for matrix polynomials out of these scalar linearizations, take the Kronecker tensor product with $\I_n$, and insert the appropriate matrix polynomial values and derivative values.

\begin{example}[Matrix polynomial case]
Let
\begin{equation*}
    \tau = \left[0, 1\right]
\end{equation*}
and
\begin{center}
    \begin{tabular}{c|c|c}
        $z$ & $\P(z)$ & $\P'(z)$ \\
        \hline
        $\tau_0 = 0$ & 
        $\begin{bmatrix}
            -1&0\\ 
            -1&1
        \end{bmatrix}$ & \\
        $\tau_1 = 1$ & 
        $\begin{bmatrix}
            0&\phantom{-}1\\ 
            1&-1
        \end{bmatrix}$ &
        $\begin{bmatrix}
            \phantom{-}1&-1\\ 
            -1&\phantom{-}0
        \end{bmatrix}$
    \end{tabular}
\end{center}
Then, the standard triple is
\begin{gather*}
	\C_{0} =
	\begin{bmatrix}
        0&0&-1&1&0&-1&1&0\\ 
        0&0&1&0&-1&1&1&-1\\ 
        1&0&1&0&0&0&0&0\\
        0&1&0&1&0&0&0&0\\
        -1&0&1&0&1&0&0&0\\
        0&-1&0&1&0&1&0&0\\
        1&0&0&0&0&0&0&0\\
        0&1&0&0&0&0&0&0
	\end{bmatrix} \quad
	\C_{1} =
	\begin{bmatrix}
		0&0&0&0&0&0&0&0\\ 
		0&0&0&0&0&0&0&0\\ 
		0&0&1&0&0&0&0&0\\ 
		0&0&0&1&0&0&0&0\\ 
		0&0&0&0&1&0&0&0\\ 
		0&0&0&0&0&1&0&0\\ 
		0&0&0&0&0&0&1&0\\ 
		0&0&0&0&0&0&0&1
	\end{bmatrix} \\
	\X = 
	\begin{bmatrix}
		0&0&0&0&1&0&1&0\\ 
		0&0&0&0&0&1&0&1
	\end{bmatrix} \quad
	\Y =
	\begin{bmatrix}
		1&0\\ 
		0&1\\ 
		0&0\\ 
		0&0\\ 
		0&0\\ 
		0&0\\ 
		0&0\\ 
		0&0
	\end{bmatrix} \>.
\end{gather*}
The Hermite interpolating polynomial is
\begin{equation*}
    \P(z) = 
    \begin{bmatrix}
        z-1&-2{z}^{2}+3z\\ 
        -3{z}^{2}+5z-1&2{z}^{2}-4z+1
    \end{bmatrix}
\end{equation*}
and the resolvent form is
\begin{equation*}
    \X\left(z\C_1 - \C_0\right)^{-1}\Y = 
    \begin{bmatrix}
        {\dfrac{-2{z}^{2}+4z-1}{6{z}^{4}-21{z}^{3}+23{z}^{2}-8z+1}}&{\dfrac{-2{z}^{2}+3z}{6{z}^{4}-21{z}^{3}+23{z}^{2}-8z+1}}\\ 
        {\dfrac{-3{z}^{2}+5z-1}{6{z}^{4}-21{z}^{3}+23{z}^{2}-8z+1}}&{\dfrac{-z+1}{6{z}^{4}-21{z}^{3}+23{z}^{2}-8z+1}}
    \end{bmatrix} \>.
\end{equation*}
Then,
\begin{equation*}
    \X\left(z\C_1 - \C_0\right)^{-1}\Y\P(z) = \I_{2} \>,
\end{equation*}
which indicates that the standard triples is correct.
\end{example}

\begin{remark}
The modified linearizations of~\cite{van2015linearization} also have standard triples that can be used for algebraic linearization, and arguably should be tabled here as well. They have the advantage of including fewer eigenvalues at infinity, or no spurious eigenvalues at infinity, which may lead to better algebraic linearizations. However, they are more involved, and we have less numerical experience with them. In particular, we do not understand their dependence on the ordering of the nodes, and so we leave their analysis to a future study.
\end{remark}

\section{Concluding remarks} \label{conclusion-sec}
The generalized standard triples (or standard quadruples, if you prefer) that we propose in this paper for convenience in algebraic linearization may have other uses.  As pointed out on p.~28 of~\cite{gohberg2009matrix} many of the properties stated in that work for monic polynomials are valid for non-monic polynomials with the appropriate changes made.  
Some caution with the results of this paper are thus mandated.

We have here \textsl{defined} these generalized standard triples simply by the resolvent representation for the matrix polynomial Equation~\eqref{eq:resolventrepresentation}, and only for linearizations, which is all we need for algebraic linearization.  
% Further, though, we have sketched a continuity argument that suggests that in the case of unimodular equivalence that Theorem 12.1.4 of~\cite{gohberg2005indefinite} does in fact generalize, and that if equation~\eqref{eq:resolventrepresentation} holds then $z\C_1-\C_0$ is in fact a linearization of $\P(z)$.

The main theorem of the paper, namely Theorem~\ref{thm:maintheorem}, gives a universal way to construct this generalized standard triple in any polynomial basis.  We also gave explicit instructions for this construction using any of several polynomial bases, for convenience, together with separate proofs using the Schur complement, which may give insight for further work in this area.

% We have also recorded a number of smaller results. In Section~\ref{sec:bernreversal} we give a new reversal for the Bernstein linearization, one which may have slightly superior numerical qualities.  We there showed strict equivalence of the appropriate matrices, giving a new proof that the Bernstein linearization is a strong one. 
In Section~\ref{sec:AlgebraicLinearization}, we have sketched an explicit construction for matrices $\E(z)$ and $\F(z)$ showing that \textsl{algebraic} linearizations are, in fact, linearizations, with $\E(z)(z\D_H-\H)\F(z) = \diag(\P(z), \I, \ldots, \I)$.
We have also given new constructions for matrices $\E(z)$ and $\F(z)$ which likewise show that the companions for the Lagrange and Hermite interpolational bases are, in fact, linearizations (of matrix polynomials of higher grade). A proof for Lagrange interpolational bases was given already in~\cite{amiraslani2008linearization}, where indeed the linearization was proved to be strong, but the result for Hermite interpolational bases is new to this paper. 
We also used Hermite Form computations to give a new (to us) pair $\E(z)$ and $\F(z)$ for the ordinary monomial basis.

%%%%%%%%%%%%%%%%%%%%%%%%%%%%%%%%%%%%%%%
\section*{Acknowledgments}
We acknowledge the support of Western University, The National Science and Engineering Research Council of Canada, the Ontario Graduate Scholarship (OGS) program, the University of Alcal{\'{a}}, the Ontario Research Centre of Computer Algebra, and the Rotman Institute of Philosophy. Part of this work was developed while RMC was visiting the University of Alcal{\'a}, in the frame of the project Giner de los Rios. The authors would also like to thank Peter Lancaster for teaching RMC long ago the value of the $5 \times 5$ example. Similarly we thank John C.~Butcher for the proper usage of the word ``interpolational''. We thank Fran{\c{c}}oise Tisseur for her thorough comments on an earlier version of this paper, and likewise Froil\'an Dopico for a similarly fruitful discussion.  Finally, we thank an anonymous referee for improving (and correcting!) several proofs.

\bibliographystyle{plain}
\bibliography{references}

\end{document}